\documentclass[12pt]{amsart}

\usepackage{amsmath,amsthm,amsfonts,latexsym,amscd,amssymb,enumerate, tikz,color}
\input{xypic}

\usepackage{amssymb}

\def \<{\langle}
\def \>{\rangle}

\newcommand{\ve}{\mathbf{v}}
\newcommand{\ho}{\mathbf{h}}

\newcommand{\Z}{\mathbb{Z}}
\newcommand{\R}{\mathbb{R}}
\newcommand{\CP}{\mathbb{CP}}
\newcommand{\RP}{\mathbb{RP}}
\newcommand{\HP}{\mathbb{HP}}

\newcommand{\calV}{\mathcal{V}}
\newcommand{\calH}{\mathcal{H}}

\newcommand{\calF}{\mathcal{F}}

\newcommand{\tildeA}{\widetilde{A}}

\newcommand{\dual}{\calF^{\#}}

\newcommand{\Id}{Id}

\newcommand{\g}{\gamma}

\newcommand{\w}{\wedge}

\newcommand{\la}{\langle}
\newcommand{\ra}{\rangle}

\renewcommand{\prod}[2]{\la\,#1 , #2\,\ra}
\newcommand{\ip}[2]{\ensuremath{\langle #1,#2\rangle}}

\newcommand{\boundary}{\partial}

\newtheorem{lem}{Lemma}

\newtheorem{cor}{Corollary}
\newtheorem{defi}{Definition}
\newtheorem{thm}{Theorem}

\theoremstyle{definition}
\newtheorem{exa}{Example}

\begin{document}

\author{Pablo Angulo-Ardoy}
\address{ Department of Mathematics, Universidad Aut\'onoma de Madrid}
\curraddr{}
\email{pablo.angulo@uam.es}

\author{Luis Guijarro}
\address{ Department of Mathematics, Universidad Aut\'onoma de Madrid, and ICMAT CSIC-UAM-UCM-UC3M}
\curraddr{}
\email{luis.guijarro@uam.es}
\thanks{The first two authors were supported by research grants  MTM2011-22612 from the Ministerio de Ciencia e Innovaci\'on (MCINN) and MINECO: ICMAT Severo Ochoa project SEV-2011-0087
}

\author{Gerard Walschap}
\address{Department of Mathematics, University of Oklahoma.}
\curraddr{}
\email{gerard@ou.edu}
\thanks{}
\title{Twisted submersions in nonnegative sectional curvature}

\subjclass{53C20} 

\begin{abstract} In \cite{Wil}, B. Wilking introduced \emph{the dual foliation} associated to a metric foliation in a Riemannian manifold with nonnegative sectional curvature, and proved that when the curvature is strictly positive, the dual foliation contains a single leaf, so that any two points in the ambient space can be joined by a horizontal curve. We show that  the same phenomenon often occurs for  Riemannian submersions from nonnegatively curved spaces even without the strict positive curvature assumption, and irrespective of the particular metric. 
\end{abstract}

\maketitle

\section{Introduction and statements of results}

Recall that in a Riemannian manifold $M$   with a metric foliation $\calF$, the \emph{horizontal curves} are those orthogonal to the leaves of $\calF$ at every point. Given any point $p\in M$, consider the subset of $M$ that can be reached by horizontal curves emanating from $p$. In general, these subsets have no particularly interesting structure. However, in the presence of nonnegative sectional curvature  (and assuming completeness of leaves), B. Wilking showed in \cite{Wil} that they form a singular metric foliation, called the \emph{dual foliation to $\calF$},   which we hereafter denote by $\dual$.
He also showed that, when the metric is positively curved, the dual foliation contains a single leaf, and hence any two points are connected by a horizontal curve.

The simplest metric foliations appear as the collection of fibers of Riemannian submersions; it is therefore natural  to study the properties of  the dual foliation in this setting. If the total space of the submersion is compact, Theorem 3 in \cite{Wil} guarantees intrinsic completeness of the dual leaves, and hence $\dual$ is a legitimate metric foliation.   Moreover, Riemannian submersions also preserve  nonnegative sectional curvature, and provide most of the know examples of such spaces. 

The aim of this note is to show that relatively mild restrictions on a submersion often results in triviality of the dual foliation. Before describing  this in more detail, it is convenient to introduce some terminology:

\begin{defi}
A Riemannian submersion $\pi:M\to B$ is said to be \emph{twisted} if the  foliation dual to the fibers of $\pi$ contains only one leaf.
\end{defi}

The term \emph{twisted} is meant to contrast  with the case of Riemannian products $F\times B\to B$, where the dual foliations correspond to the submanifolds $\{p\}\times B$, $p\in F$. For an arbitrary Riemannian submersion $M\to B$, any closed curve in $B$ beginning and ending at $b\in B$ induces a diffeomorphism of the fiber $F= \pi^{-1}(b)$ over $b$ by lifting the curve horizontally to points of $F$. The collection of all these diffeomorphisms forms a group called the \emph{holonomy group} of $\pi$ at $b$. In the case of a Riemannian product $F\times B\to B$, that group is trivial. In general, the holonomy group is not a Lie group. However,  one large class of Riemannian submersions are the so-called \emph{homogeneous} ones: if $M$ is a Riemannian manifold, and $G$ a compact Lie group acting by isometries on $M$ with principal orbits, then the space $B=G\backslash M$ of orbits inherits in a natural way a Riemannian metric for which the quotient map $\pi:M\to B$ is a Riemannian submersion. In this case, $\pi$ is actually a fiber bundle, the holonomy group at any point is a Lie group, and is, in fact, the structure group of the bundle. Thus, if the dual foliation consists of a single leaf, then the structure group acts transitively on the fiber, and the bundle may be thought of as being  in essence twisted.

Among the several further reasons why twisted Riemannian submersions are interesting, two in particular stand out: first of all,  as mentioned earlier, they automatically occur in positive curvature, and   could help us understand topological similarities between positive and nonegative sectional curvature; second, they provide  a rich extra structure on the space,  such as the subriemannian distance on $M$ obtained by considering  the infimum of the length of horizontal curves between two points. 

In this paper, we provide several sources of twisting for Riemannian submersions. Recall that even non homogenous  Riemannian submersions are topological fibrations, and twisting can already be built into the structure of the fibration;  this is studied here in  terms of homotopy, using the boundary operator of the long exact homotopy sequence of the fibration, and in cohomological terms through the use of transgressive elements. Both are described in section \ref{topology}. Next, we  consider cases where  a submersion is either twisted or else the metric is rigidly constrained; this is illustrated for submersions of the form $\pi:M\times S^2\to M$ for arbitrary metrics on the product. Section \ref{torussection} deals with the special case of principal torus bundles, showing that the corresponding quotient map will be twisted whenever the total space is simply connected. Finally, we conclude with some  local conditions on the metric that guarantee the submersion is twisted. 

The authors are indebted to the referee for pointing out a mistake in the first version of Theorem 3, and to Kris Tapp for pointing out a mistake in subsection \ref{nonsphericalfiber} of the first version of this paper, and for telling us about reference \cite{KM}.

All the manifolds appearing in this paper are considered connected.

\section{Twist induced by the topology}\label{topology}

\subsection{The boundary operator}

R. Hermann showed in \cite{Her}  that  for complete $M$, Riemannian submersions $\pi:M\to B$ are locally trivial fiber bundles; i.e., any point $b\in B$ has a neighborhood $V$ such that $\pi^{-1}(V)$ is diffeomorphic with $U\times F$, where $F=\pi^{-1}(b)$. In particular, they are   fibrations.
Under a nonnegative curvature bound,  the dual foliation associated to the fibers of the submersion can be trivial already because the fibration itself is twisted. One way to detect this is by means of the boundary operator of the fibration:
Specifically, denote by $F$ the fiber of $\pi$ over some base point $b_0\in B$ and consider the long exact homotopy sequence 
$$
\dots \overset{\boundary}\longrightarrow \pi_q(F)\overset{i_*}\longrightarrow \pi_q(M)\overset{\pi_*}\longrightarrow\pi_q(B)\overset{\boundary}\longrightarrow\dots
$$
of the fibration $F\to M\to B$.

\begin{thm}\label{boundary}
Let $\pi:M\to B$ denote a Riemannian submersion from a complete manifold $M$ with nonnegative sectional curvature . If $\pi$ is not twisted, then for any $q$ and any $[a]\in\pi_q(B)$, the homotopy class $\boundary[a]$ can be represented by a map $f:S^{q-1}\to F$ that is not surjective. 
\end{thm}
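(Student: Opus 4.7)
The plan is to build a representative of $\boundary[a]$ that lies entirely inside the dual leaf through a suitably chosen basepoint of the fiber, and then to show that such a leaf meets each fiber in a proper subset.

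First I would establish: if $\pi$ is not twisted and $L\subsetneq M$ is a leaf of $\dual$, then $L$ meets every fiber $\pi^{-1}(b)$ in a nonempty proper subset. Nonemptiness is automatic from the definition of $\dual$: pick $p\in L$, join $\pi(p)$ to $b$ by a curve in $B$, and lift it horizontally starting at $p$; the lift stays in $L$ and ends in $\pi^{-1}(b)$. Conversely, if $L\cap\pi^{-1}(b_1)$ equalled all of $\pi^{-1}(b_1)$ for some $b_1$, then every $q\in M$ could be reached by reversing the horizontal lift of a curve in $B$ from $\pi(q)$ to $b_1$ starting at a suitable point of $L\cap\pi^{-1}(b_1)$, forcing $L=M$.

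Next, choose $m_0\in L\cap F$ as basepoint of the fiber $F=\pi^{-1}(b_0)$ and represent $[a]\in\pi_q(B)$ by a smooth map $a:(D^q,\partial D^q)\to(B,b_0)$. Define $\tilde a:D^q\to M$ by declaring $\tilde a(x)$ to be the endpoint of the horizontal lift, starting at $m_0$, of the radial curve $t\mapsto a(tx)$, $t\in[0,1]$. By Hermann's result \cite{Her} and the completeness of $M$, the lift exists for all $t$, and smooth dependence of ODEs on initial data makes $\tilde a$ continuous (in fact smooth). By construction $\pi\circ\tilde a=a$ and $\tilde a(\partial D^q)\subset F$, so $\tilde a$ is a lift of $a$ as a map of pairs, and the standard description of the connecting homomorphism of a Hurewicz fibration yields $\boundary[a]=[\tilde a|_{\partial D^q}]\in\pi_{q-1}(F)$.

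Since every curve in the image of $\tilde a$ is a horizontal curve issuing from $m_0\in L$, we have $\tilde a(D^q)\subset L$; in particular $\tilde a|_{\partial D^q}$ takes values in $L\cap F\subsetneq F$, so it is not surjective. The main obstacle is the first step, which relies only on the description of $\dual$ via horizontal reachability; notably, one does not need the intrinsic completeness of dual leaves from \cite{Wil}. A minor point is that elements of $\pi_q(B)$ are only continuous, but this is resolved by taking a smooth representative in the homotopy class, after which the horizontal lift $\tilde a$ is automatically smooth.
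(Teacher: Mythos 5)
Your argument is essentially the paper's proof: you horizontally lift, from a fixed point $m_0$ of the fiber, a family of curves sweeping out a representative of $[a]$, observe that the resulting representative of $\partial[a]$ has image contained in the set of points horizontally reachable from $m_0$ (the dual leaf through $m_0$), and conclude it cannot be onto $F$ since a proper dual leaf cannot contain a whole fiber --- the paper does exactly this, sweeping $S^q$ by circles through the basepoint $e_1$ instead of using radial curves in $D^q$. The only adjustment your version needs is to choose the representative $a:(D^q,\partial D^q)\to(B,b_0)$ with $a(0)=b_0$ (say, constant equal to $b_0$ along one radius), since otherwise the horizontal lift of $t\mapsto a(tx)$ ``starting at $m_0$'' is undefined; this choice is always available and also takes care of the basepoint condition for $\partial[a]$.
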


\begin{proof}
Denote by $u^i:\R^{q+1}\to\R$ the projection that assigns to a point its $ i$-th coordinate. A homotopy class $[a]$ in $\pi_q(B, b)$ is determined by a map $g:(S^{q},e_1)\to (B,b)$, where 
\[
e_1=(1,0,\dots,0)\in S^{q-1}=\{p\in S^q\mid u^{q+1}(p)=0\}
\]
lies in the equator $S^{q-1}$. For any $p\ne e_1$ in this equator, there exists a unique plane parallel to the $u^{q+1}$-axis that contains $e_1$ and $p$. It intersects the sphere in a circle $c_p:[0,1]\to S^q$, $c_p(0)=c_p(1) =e_1$. Fix some point $m$ in $\pi^{-1}(b)$, and denote by $\tilde c_p:[0,1]\to M$ the horizontal lift starting at $m$ of $g\circ c_p$. Then $\boundary[a]$ is represented by $f:(S^{q-1}, e_1)\to( F,m)$, with $f(p)= \tilde c_p(1)$ for $p\ne e_1$, and $f(e_1)=m$. Since the image of $f$ consists of points that are reachable from $m$ by means of horizontal curves, it cannot be onto $F$ (since otherwise every point of $M$ could be reached in this way).
\end{proof}
\begin{center}
\begin{tikzpicture}[scale=.6]
\shadedraw[inner color=white, outer color= gray!20!white] (5,5) circle (4cm);	
\draw [ ultra thick](1,5) arc (180:360: 4cm and 2 cm);
\draw [ very thick, dashed](9,5) arc (0:180: 4cm and 2 cm);
\filldraw (5,3) circle (1mm);
\draw (6.3,8.7) .. controls (5,8.7) and (5,6) ..  (5,3);
\draw (5,3) .. controls (5,1.5) and (5.9,1.2) ..  (6.3,1.2);
\draw [dashed] (6.3,8.7) arc (90:0:1.5cm and 4cm);
\draw (5.5,9) arc (90:270:0.6cm and 4cm);
\draw [dashed] (5.5,9) arc (90:0:0.6cm and 4cm);
\draw [dashed] (6.1,5) arc (0:-90:0.6cm and 4cm);
\draw [dashed](7.8,5) arc (0:-90:1.5cm and 3.7cm);
\filldraw (7.6,6.5) circle (1mm);
\filldraw (6,6.9) circle (1mm);
\draw (4.8,2.5) node[anchor= east]{$e_1$};
\draw (7.6,5.9) node[anchor= west]{$p$};
\draw [->] (1,2)--(2,3.6);
\draw (1,2) node[anchor=north]{$S^{q-1}$};
\draw[<-](5.5,8)--(8,8);
\draw (8,8) node[anchor=west]{$c_p$};
\draw (6, 6.2) node[anchor=west]{$q$};
\draw (4,8) node[anchor=east]{$c_q$};
\draw[->](4,8)--(5,8);
\end{tikzpicture}
\end{center}

\subsubsection{Applications to submersions with spherical fibers}
When $F$ is homotopy-equivalent to a sphere, the conclusion of Theorem \ref{boundary} can be refined:

\begin{cor}\label{fiber_sphere}
Let $\pi:M\to B$ a Riemannian submersion with fiber a homotopy sphere $S^k$.  If $\pi$ is not twisted, then for any $q\geq 2$ we have short exact sequences
$$
0 \overset{\boundary}\longrightarrow \pi_q(F)\overset{i_*}\longrightarrow \pi_q(M)\overset{\pi_*}\longrightarrow\pi_q(B)\overset{\boundary}\longrightarrow 0
$$
that split. 
\end{cor}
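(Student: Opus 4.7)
The plan is to run Theorem~\ref{boundary} in every homotopical degree and sharpen its output using that $F$ is a homotopy $k$-sphere: a map $S^{q-1}\to F$ whose image misses a point must be null-homotopic. This forces the connecting morphism $\partial$ of the long exact homotopy sequence to vanish identically, which produces the displayed short exact sequences.

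The essential input is that for a closed manifold $F$ homotopy equivalent to $S^k$ and any $p\in F$, the complement $F\setminus\{p\}$ is contractible. I would verify this by excision, which identifies $H_*(F,F\setminus\{p\})$ with $\tilde H_*(S^k)$; the long exact sequence of the pair, together with the fact that the fundamental class of $F$ restricts to a generator of the local orientation at $p$, then yields $\tilde H_*(F\setminus\{p\})=0$. Simple connectivity is immediate when $k\le 2$ and follows from van Kampen applied to $F=(F\setminus\{p\})\cup D^k$ when $k\ge 3$, so Whitehead's theorem upgrades acyclicity to contractibility.

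The main conclusion is then immediate. For any $q\ge 2$ and any $[a]\in\pi_q(B)$, Theorem~\ref{boundary} supplies a representative $f:S^{q-1}\to F$ of $\partial[a]$ whose image misses some $p\in F$. As $f$ factors through the contractible space $F\setminus\{p\}$, it is null-homotopic, so $\partial[a]=0$. Since $[a]$ and $q$ were arbitrary, $\partial$ vanishes in every degree, and the long exact sequence breaks into the short exact sequences displayed in the statement.

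To produce the splitting of each short exact sequence, the natural strategy is to build a homotopy section $s:B\to M$ of $\pi$, for then $s_*:\pi_q(B)\to\pi_q(M)$ splits $\pi_*$ in each degree. I would attempt this by obstruction theory applied to the classifying map $B\to B\,\mathrm{haut}(S^k)$: the obstructions to a section live in cohomology groups with coefficients in $\pi_q(S^k)$, and the degreewise vanishing of $\partial$ should kill them successively through the Postnikov tower. This last step is where I expect the argument to be most delicate; the geometric heart of the corollary, however, is the vanishing of $\partial$ coming from Theorem~\ref{boundary} combined with the contractibility of the punctured homotopy sphere.
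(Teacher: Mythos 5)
The first part of your proposal is exactly the paper's argument: Theorem \ref{boundary} produces a non-surjective representative $f\colon S^{q-1}\to F$ of $\partial[a]$, and since the complement of a point in a homotopy $k$-sphere is contractible, $f$ is null-homotopic, so $\partial$ vanishes in every degree and the long exact sequence breaks into the displayed short exact sequences. The paper compresses this into one sentence (a nonzero $\partial[a]$ would force every representative to be onto); your verification of the contractibility of $F\setminus\{p\}$ is simply the standard detail behind that sentence, and it is correct.

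The gap is in your last step, the splitting. Your plan is to build a homotopy section of $\pi$ by obstruction theory and to kill the obstructions using the degreewise vanishing of $\partial$. That inference is not valid: the obstructions to a section live in $H^{q+1}(B;\pi_q(S^k))$, and the vanishing of the connecting homomorphisms only constrains these classes through their pairing with spherical homology classes. Already for the primary obstruction --- the Euler class of the sphere fibration --- what non-twistedness yields is precisely that it is not spherical (this is the content of Theorem \ref{transgress}), not that it vanishes; a class such as a cup product of lower-dimensional classes can be nonzero, obstruct every section, and still pair trivially with all spherical classes (for instance an $S^3$-bundle over $S^2\times S^2$ whose Euler class is the product of the two generators). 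So the Postnikov-tower step as sketched would fail in general. Note also that a homotopy section is more than is needed: a splitting of each short exact sequence of abelian groups would suffice, but that too requires an argument you have not supplied. For comparison, the paper's own proof addresses only the vanishing of $\partial$ and gives no separate justification for the splitting, so the portion you actually prove coincides with what the paper proves; as a self-contained proof of the statement as written, however, the splitting step is a genuine gap, and the specific route you propose does not work.
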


The proof is immediate, since in the above case the image of $[a]\in\pi_q(B)$ by the boundary map can be nonzero only if every map representing $\boundary[a]$ is onto. 

As a consequence, it follows that most of the results in \cite{GW2}, section 2, apply to this context when suitably rewritten. Specifically, we have the following:
\begin{exa}
The following Riemannian submersions are twisted for \emph{any} nonnegatively curved metric  on the total space:
\begin{enumerate}
\item The Hopf fibrations $\pi_1:S^7\to S^4$ and $\pi_2:S^{15}\to S^8$;
\item the unit sphere bundles of $T\CP^n$ and $T\HP^n$ for even $n$;
\item the unit sphere bundle of the canonical bundle of the Grassmannian of oriented $k$-planes in $\R^n$ (when $n$ is large enough).
\end{enumerate}
The submersions in the last two cases are the restrictions of the canonical bundle projections.

\end{exa}

\subsection{A twisted submersion with nonspherical fiber}\label{nonsphericalfiber}

Twisted submersions also appear naturally in fibrations  whose fiber is not a homotopy sphere. We exhibit one  where the fiber is a complex projective space. In order to do this, recall that
$S^6$ admits an almost complex structure $J:TS^6\to TS^6$; this allows us to consider the space $M$  of complex lines in the tangent bundle of $S^6$. It is easy to see that $M$ is a fiber bundle over $S^6$ with fiber $\CP^2$. 
\footnote{$M$ appears in \cite{KM}, theorem 1.3,  where it has been proved that it admits a quasipositively curved metric.}
Next, we construct a nonnegative curvature metric on $M$ for which the projection $\pi:M\to S^6$ is a Riemannian submersion: if $G_2\times S^5 $ is endowed  with the  product of the standard biinvariant metric on $G_2$ and the round metric on the sphere, then there is a free diagonal action by isometries of $SU(3)$ on this product  given by 
\[g(g_1, v)=(g_1\cdot g^{-1}, g( v)), \qquad g\in SU(3)\subset G_2, \,g_1\in G_2, \,v\in S^5.
\]
The quotient $P=G_2\times_{SU(3)} S^5$ therefore inherits a unique metric for which the projection $G_2\times S^5\to P$ is a Riemannian submersion. This metric has  nonnegative curvature by O'Neill's formula. $P$ is the total space of the unit tangent bundle of $S^6$, and the almost complex structure on $S^6$ induces an isometric $S^1$-action on $P$ via
$$
z([g_1,v])=[g_1, \cos{t} v +\sin{t} Jv],\qquad z=e^{it}\in S^1,
$$ 
with $[g_1,v]$ denoting the image of $(g_1,v)$ by the projection $G_2\times S^5\to P$.
Since the $S^1$-orbit of a unit vector $v\in T_pS^6$ is the unit circle $\{\cos{t} v +\sin{t} Jv\mid 0\le t\le 2\pi\}$ through $v$ and $Jv$,
the quotient of this action is $M$ and the induced metric is again nonnegatively curved by O'Neill's formula. Denote by $p:P\to M$ the quotient map.

Consider now the boundary maps 
\begin{eqnarray}
\partial_1: \pi_6(S^6)\to\pi_5(S^5) \\
\partial_2: \pi_6(S^6)\to\pi_5(\CP^2)
\end{eqnarray}
 in the long exact homotopy sequences  corresponding to the fibrations $S^5\to P\to S^6$ and
$\CP^2\to M\to S^6$. 

\begin{lem}
If $h:S^5\to S^5$ is the map representing $\partial_1[\Id]$, where $\Id:S^6\to S^6$ is the identity, then $p\circ h$ is the map representing 
$\boundary_2[\Id]$.
\end{lem}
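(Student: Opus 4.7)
The plan is to recognize the lemma as an instance of the naturality of the connecting homomorphism $\partial$ in the long exact homotopy sequence of a fibration with respect to bundle maps. The first step is to observe that $p\colon P\to M$ is a bundle map between the two fibrations over $S^6$, i.e., that the projections to $S^6$ satisfy $\pi_M\circ p=\pi_P$. This is immediate from the construction: the $S^1$-action defining $M=P/S^1$ only rotates inside the unit tangent sphere at each point of $S^6$, so it preserves each fiber of $\pi_P$. Consequently $p$ intertwines the two projections, and its fiberwise restriction $p|_{S^5}\colon S^5\to S^5/S^1=\CP^2$ is the principal circle quotient.

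With this bundle map in hand, naturality of $\partial$ yields the commutative diagram
\begin{equation*}
(p|_{S^5})_{\ast}\circ\partial_1 \;=\; \partial_2\circ(\mathrm{id}_{S^6})_{\ast}\colon\;\pi_6(S^6)\longrightarrow\pi_5(\CP^2).
\end{equation*}
Evaluating at $[\Id]\in\pi_6(S^6)$ and using that $h$ represents $\partial_1[\Id]$ gives
\begin{equation*}
\partial_2[\Id]\;=\;(p|_{S^5})_{\ast}[h]\;=\;[p|_{S^5}\circ h]\;=\;[p\circ h],
\end{equation*}
which is exactly the statement of the lemma.

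If one prefers to argue directly from the horizontal-lift description of $\partial$ used in the proof of Theorem \ref{boundary}, rather than quoting naturality, the only additional step is to check that $p$ sends horizontal lifts for $\pi_P$ to horizontal lifts for $\pi_M$. Since $p$ is itself a Riemannian submersion whose vertical distribution (tangent to the $S^1$-orbits) is contained in the vertical distribution of $\pi_P$ (tangent to the $S^5$-fibers), any vector horizontal for $\pi_P$ is a fortiori horizontal for $p$, and the Riemannian submersion property of $p$ then shows that its image under $dp$ is orthogonal to the $\CP^2$-fiber of $\pi_M$. This compatibility between horizontal distributions is really the only obstacle; once in place, the identification of $p\circ h$ as a representative of $\partial_2[\Id]$ is immediate from the construction in Theorem \ref{boundary}.
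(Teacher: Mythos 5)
Your proposal is correct and follows essentially the same route as the paper: the paper also notes that $p$ is fiber preserving, invokes the induced map of long exact homotopy sequences (citing Steenrod 17.5), and reads off the claim from commutativity of the square involving $\partial_1$ and $\partial_2$. Your explicit verification that $p$ intertwines the projections and restricts on fibers to the quotient $S^5\to\CP^2$, and your optional horizontal-lift argument, are just more detailed versions of the same naturality argument.
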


\begin{proof}
Denote by $\pi_P:P\to S^6$ the unit tangent bundle projection. Since the map $p:P\to M$ is fiber preserving, it induces a homomorphism between the two homotopy sequences, such that the diagram
\[
\begin{CD}
\cdots \pi_6(S^5) @>i_{*}>> \pi_6(P) @>\pi_{P*}>> \pi_6(S^6) @>\partial_1>>\pi_5(S^5) \cdots\\
 @VVp_*V                        @VVp_*V                         @VV\text{id}V               @VVp_*V  \\
 \cdots \pi_6(\CP^2) @>>i_*> \pi_6(M) @>>\pi_*> \pi_6(S^6) @>>\partial_2> \pi_5(\CP^2)\cdots
\end{CD}
\]
commutes, see \cite{St} 17.5. The assertion then follows from commutativity of the last square in the diagram.
\end{proof}

Suppose now that $M$ has an arbitrary Riemannian metric with nonnegative curvature such that $\pi:M\to S^6$ is a Riemannian submersion for this metric, and furthermore that the corresponding dual foliation has more than one leaf. Then the map  $p\circ h$ from above would not be onto by Theorem \ref{boundary}. Since $\CP^2$ has a cell structure of the form $e_4 \cup\CP^1$, the map $p\circ h$ deforms in $\CP^2\setminus\{\text{point}\}$ to a map $h':S^5\to \CP^1\simeq S^2$. But $\pi_5(S^2)$ is of finite order, so that $p\circ h:S^5\to \CP^2$ represents an element of finite order in $\pi_5(\CP^2)\simeq\pi_5(S^5)\simeq \Z$, and hence vanishes. 

This, however, is not the case: it is well known---see for example \cite{St}  23.4---that for even-dimensional spheres $S^{2k}$, the boundary map $\partial_1:\pi_{2k}(S^{2k})\to \pi_{2k-1}(S^{2k-1})$ in the homotopy sequence of the unit tangent bundle maps a generator of the first group to two times a generator of the second one. Since $p_*:\pi_5(S^5)\to \pi_5(\CP^2)$ is an isomorphism, the composition $\partial_2=p_*\circ\partial_1$ also has that property.

\subsection{The transgression} The homotopy conditions appearing in Theorem \ref{boundary} can be reformulated in cohomological terms using \emph{transgressive elements}; we begin by recalling some of the main definitions from \cite{BT}.

Let $\pi:M\to B$ a Riemannian submersion with fiber $F$. An element $\omega\in H^q(F)$ is said to be \emph{transgressive} if  it equals the restriction of a global $q$-form $\psi$ on $M$ such that $d\psi=\pi^*\tau$ for some $(q+1)$-form $\tau$ on $B$. Since $\pi^*$ is injective, $\tau$ is a closed form.  The correspondence that assigns $\tau$ to $\omega$ is not well defined as such; its domain consists of $\omega$'s in a certain subgroup of $H^q(F)$, and the $\tau$'s are defined modulo some subgroup of $H^{q+1}(B)$. When this is done, the map obtained is a homomorphism $T$ called \emph{the transgression}.
The reader interested in further details is invited to consult  the above reference and   \cite{MC}. 

We will also need the following definition used in \cite{GSW}:
\begin{defi}
     Let $X$ be a CW-complex and $a\in H^k(X,\Z)$ a cohomology
     class.  $a$ is said to  be \emph{spherical} if there is a map $f\colon S^k\to X$
     such that $f^*a \ne 0\in H^k(S^k,\Z)$. Otherwise, $a$ is
     said to be \emph{not spherical}.
\end{defi}

Notice that a spherical element has infinite order.  Moreover, such a
cohomology class is never a 
cup product of lower dimensional classes. 

\begin{thm}\label{transgress}
Let $\omega$ be a transgressive cohomology class in $H^k(F)$, where $k=\dim F$. If $\pi:M\to B$ is a non twisted Riemannian submersion, then any cohomology class representing $T(\omega)$ is not spherical.
\end{thm}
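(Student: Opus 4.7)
The plan is to argue by contrapositive: assume that some cohomology class $\tau\in H^{k+1}(B;\R)$ representing $T(\omega)$ is spherical, so that there exists $g\colon S^{k+1}\to B$ with $\int_{S^{k+1}}g^*\tau\neq 0$, and derive a contradiction with Theorem \ref{boundary}. The central tool will be the duality
$$
\int_{S^{k+1}} g^*\tau \;=\; \int_{S^k} f^*\omega
$$
relating the transgression to the boundary operator, valid for any representative $f\colon S^k\to F$ of $\partial[g]\in\pi_k(F)$. Once this is established the conclusion is immediate: Theorem \ref{boundary} lets us pick $f$ not surjective, so that $f$ factors through $F\setminus\{p\}$ for some $p\in F$. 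Since $\omega$ is a top-degree class on $F$ (which must be a closed orientable $k$-manifold, as otherwise $H^k(F;\R)=0$ and the statement is vacuous), the standard vanishing $H^k(F\setminus\{p\};\R)=0$ forces $f^*[\omega]=0$ and hence $\int_{S^k}f^*\omega=0$, contradicting the nonvanishing of the left-hand side.

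The main step, therefore, is the duality. I would decompose $S^{k+1}=D_+\cup D_-$ along the equator $S^k$; since $D_\pm$ is contractible and $\pi$ is a fibration, $g|_{D_\pm}$ lifts to $\tilde g_\pm\colon D_\pm\to M$, agreeing at a basepoint $e_1\in S^k$. Let $\psi\in\Omega^k(M)$ be the primitive provided by transgression, with $d\psi=\pi^*\tau$ and $\psi$ restricting to a representative of $\omega$ on each fiber. Then $d(\tilde g_\pm^*\psi)=g^*\tau|_{D_\pm}$ and Stokes on each hemisphere yields
$$
\int_{S^{k+1}}g^*\tau \;=\; \int_{S^k}\bigl(\tilde g_+^*\psi-\tilde g_-^*\psi\bigr).
$$
Trivialize the pullback bundle $g^*M|_{D_-}\cong D_-\times F$ and restrict to $S^k$, so that $\tilde g_-|_{S^k}$ corresponds to the constant section $s_0(x)=(x,m_0)$ while $\tilde g_+|_{S^k}$ corresponds to $s_f(x)=(x,f(x))$, where $f$ is precisely the clutching map representing $\partial[g]$. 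Let $\eta\in\Omega^k(S^k\times F)$ be the pullback of $\bar g^*\psi$ under this trivialization; since $g^*\tau$ vanishes on the $k$-dimensional sphere $S^k$ for degree reasons, $\eta$ is closed, and it restricts to a representative of $\omega$ on every fiber $\{x\}\times F$. The Künneth formula combined with $H^q(S^k;\R)=0$ for $0<q<k$ then forces
$$
[\eta] \;=\; \text{pr}_F^*[\omega]+c\,\text{pr}_{S^k}^*[\text{vol}_{S^k}]
$$
for some constant $c$. Pulling back by $s_0$ and $s_f$ and subtracting kills the second term and leaves $s_f^*[\eta]-s_0^*[\eta]=f^*[\omega]$ in $H^k(S^k;\R)$; since every $k$-form on $S^k$ is automatically closed, integration produces the required duality.

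The chief technical burden is organizing the two lifts $\tilde g_\pm$ via a common trivialization over the equator and tracking the clutching carefully; once this is in place, the Künneth decomposition of $[\eta]$ and the pullback comparison via $s_0$ and $s_f$ are routine. Note that the curvature hypothesis enters only through the appeal to Theorem \ref{boundary}; the duality step is a purely topological statement reflecting that transgression is, up to sign, the cohomological dual of the connecting homomorphism $\partial$ in the long exact homotopy sequence of the fibration.
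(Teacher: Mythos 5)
Your proposal is correct, but it takes a genuinely different route from the paper. The paper's proof is much more economical: it represents the spherical class by a map $g\colon D^{k+1}\to B$ collapsing the boundary to the basepoint, lifts the disk \emph{horizontally} exactly as in the proof of Theorem \ref{boundary} (so that the restriction of the lift to $\partial D^{k+1}$ lands in the base fiber $F$ and is, by construction, the non-surjective representative of $\partial[a]$), and then a single application of Stokes' theorem with $d\psi=\pi^*\tau$ gives $0\neq\langle\tau,[a]\rangle=\int_{S^k}(\text{pullback of }\omega)=0$, the last vanishing because the boundary map misses a point of $F$. You instead prove a purely topological duality $\int_{S^{k+1}}g^*\tau=\pm\int_{S^k}f^*\omega$ for \emph{any} representative $f$ of $\partial[g]$, via homotopy-lifting over the two hemispheres, the Steenrod identification of $\partial[g]$ with the clutching map evaluated at the basepoint, and a K\"unneth decomposition over $S^k\times F$; only then do you invoke Theorem \ref{boundary} together with homotopy invariance of $\int_{S^k}f^*\omega$ and the vanishing of $H^k(F\setminus\{p\};\R)$. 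What your route buys is a clean separation of topology from geometry (the nonnegative curvature enters only through Theorem \ref{boundary}), and it makes explicit two points the paper leaves implicit: why the final integral vanishes for a non-surjective map into the top dimension, and why one may pass between different representatives of $\partial[g]$. What it costs is extra machinery (clutching functions, K\"unneth, basepoint/trivialization bookkeeping) that the paper avoids entirely by using the horizontal lift of the disk, for which the non-surjective representative appears for free; also, your statement that $\eta$ restricts to a representative of $\omega$ on \emph{every} fiber is more than you need and should be pinned down only at the basepoint fiber (after normalizing $g$ to be based and the trivialization to be the identity there), which is all the K\"unneth argument requires.
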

\begin{proof}
Assume otherwise; then there is a map $f:S^{k+1}\to B$ such that 
$f^*\tau\neq 0$, where $\tau$ represents $T(\omega)$. Consider the cohomology class $[a]$ of $B$ given by $f_*[S^{k+1}]$.
%
$[a]$ can be represented by a smooth map $g:D^{k+1}\to B$ mapping $\partial D^{k+1}$ to a fixed point of $B$. As in the proof of Theorem \ref{boundary}, lift $g$ horizontally to a map $G:D^{k+1}\to M$, and denote
by $g:S^k\to F$ its restriction to the boundary of $D^{k+1}$. If the submersion $\pi$ is not twisted, $g$ cannot be onto.  Then
\begin{equation}
\begin{split}
0\neq \ip{\tau}{[a]}&=\int_{[a]}\,\tau = \int_{G(D^{k+1})}\, \pi^*\tau =  \int_{G(D^{k+1})}\, d\psi 
 =\int_{g(S^k)}\, \psi \\
&= \int_{g(S^k)}\, \omega. 
\end{split}
\end{equation}
However since $g(S^k)$ is not the whole fiber $F$, the last integral vanishes.
\end{proof}

For the applications, it is convenient to recall that 
the transgression map $T$ appears in the Leray-Serre exact sequence of the fibration $F\to M\to B$ as the differential 
$d_n:E_n^{0,n-1}\to E_n^{n,0}$, where $E_n^{0,n-1}$ is contained in $H^{n-1}(F)$.   

\begin{exa}
Consider the fibration $SO(3)\to \RP^7\to S^4$. The transgression map is nontrivial because the differential $d_4$ above does not vanish: if it were trivial, then  the exact sequence would  imply that $H^4(\RP^7)\neq 0$. Since every nonzero class in $H^4(S^4)$ is spherical, Theorem \ref{transgress} implies that $\pi:\RP^7\to S^4$ is twisted. Once again, this fibration has non-spherical fiber.  
\end{exa}

\section{Twisting in products}
If $M_i$, $i=1,2$, are Riemannian manifolds with nonnegative curvature, then the product metric on $M_1\times M_2$ also has nonnegative curvature. One interesting question is whether there exist other metrics of nonnegative curvature on the product. We investigate this here in a specific context:
\begin{thm}
Let $M$ be a compact simply connected manifold. Assume there is a nonnegatively curved metric on $N=M\times S^2$ such that there is a Riemannian submersion $\pi:N\to M$ with totally geodesic fibers. Then either:
\begin{enumerate}
\item $N$ is isometric to the Riemannian product $M\times S^2$ (for some nonnegatively curved $S^2$), or
\item $\pi$ is twisted.
\end{enumerate} 
\end{thm}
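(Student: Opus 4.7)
The plan is to analyze the holonomy group $\Gamma$ of $\pi$ regarded as a connection on the $S^2$-bundle $N\to M$. Because the fibers are totally geodesic, horizontal parallel transport between fibers is an isometry, so $\Gamma$ is a closed subgroup of $\mathrm{Isom}(S^2, g_F)$ for a common fiber metric $g_F$; since $M$ is simply connected, $\Gamma$ is connected. The closed connected subgroups of $\mathrm{Isom}(S^2, g_F)$ are exhausted by the trivial group, $SO(2)$ (when $g_F$ is a surface of revolution), and $SO(3)$ (when $g_F$ is round), and I would treat each in turn.

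If $\Gamma=SO(3)$, the holonomy acts transitively on each fiber, and lifts of loops and paths in $M$ show that the dual leaf through every point is all of $N$, giving case~(2). If $\Gamma$ is trivial, the connection is flat, so the $A$-tensor vanishes; combined with the totally geodesic fibers this makes the horizontal leaves also totally geodesic, and the two orthogonal totally geodesic foliations integrate on the simply connected base $M$ to a global isometric splitting $N=M\times(S^2,g_F)$, giving case~(1).

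The main work is to rule out $\Gamma=SO(2)$. Here $g_F$ has two fixed poles which give rise to two horizontal totally geodesic sections $\sigma_\pm\colon M\to N$; these are proper dual leaves, so $\pi$ is not twisted and we must show this case actually forces the metric product. The topological hypothesis $N=M\times S^2$ with $M$ simply connected forces the principal $SO(2)$-bundle underlying the holonomy reduction to be trivial (the associated complex line bundle $L$ is stably trivial on the simply connected base, hence trivial), so after a gauge change the connection has the global form $d\psi+\pi^*B$ for some $B\in\Omega^1(M)$, and the $A$-tensor becomes $A_XY=\tfrac12(dB)(X,Y)\,\xi$, where $\xi$ is the vertical Killing field of the fiber $SO(2)$-action. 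The plan is to combine O'Neill's horizontal-plane formula $K_N(X,Y)=K_M(X,Y)-3|A_XY|^2$ with the mixed-plane formula for totally geodesic fibers, namely $K_N(X,V)=|A_XV|^2+g\bigl((\nabla_V A)(X,X),V\bigr)$, together with the degeneration $|\xi|\to 0$ along the pole sections and the smoothness of the ambient metric there, to conclude that $dB\equiv 0$; this collapses the case into that of trivial holonomy.

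The main technical obstacle is precisely this last step. The horizontal-plane inequality by itself only gives a bound of the form $|dB|^2\le 4K_M/(3|\xi|^2)$, which is strongest at the equator of the fiber and becomes vacuous on the pole sections; the global vanishing of $dB$ has to be extracted by coupling it with the mixed-plane formula (whose $(\nabla_V A)$-term is the delicate piece) and with the regularity of the metric near $\sigma_\pm(M)$ where the Killing field $\xi$ degenerates. Once $dB\equiv 0$, one recovers the flat case and concludes the isometric product structure.
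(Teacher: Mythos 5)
Your case division by the holonomy group of $\pi$ (legitimate here because totally geodesic fibers make the holonomy diffeomorphisms isometries of the fiber, and simple connectivity of $M$ makes the group connected) is a genuinely different framework from the paper, and the two easy cases are fine: transitive holonomy gives a single dual leaf, and trivial holonomy gives $A\equiv 0$ and the splitting. But the heart of the theorem is exactly the intermediate case, and there your proposal has a genuine gap: you do not prove $dB\equiv 0$, you only announce a plan, and the plan as described cannot work. With totally geodesic fibers the O'Neill formula for mixed planes reduces to $K(X,V)=|A_XV|^2\ge 0$ (the $(\nabla A)$-term you write is not there; indeed $(\nabla_V A)_XX=0$ by antisymmetry of $A$), so the mixed-plane curvature imposes \emph{no} condition on $B$; and the horizontal-plane formula only gives the upper bound $3|A_XY|^2\le K_M(X,Y)$, which can never force $A$ to vanish pointwise. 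So the two ingredients you intend to ``couple'' have no teeth: nonnegativity of curvature as a pointwise inequality is compatible with nonflat connection metrics, and the vanishing of $A$ in this situation is a \emph{global} rigidity phenomenon, not a pointwise one. In addition, the preliminary reduction you make is unjustified and in general false: $N$ being diffeomorphic to $M\times S^2$ does not force the holonomy $SO(2)$-bundle to be trivial (over $M=S^2$, the sphere bundle $S(L\oplus\R)$ of a line bundle $L$ with even nonzero Euler class is diffeomorphic to $S^2\times S^2$ while $L$ is nontrivial), and there is no reason offered why $L$ should be stably trivial.

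For comparison, the paper never computes with a connection form. It uses Wilking's structure theory: if $\pi$ is not twisted, the dual leaves have codimension $\le 2$, they cannot all have codimension one (that would give a line field on $S^2$), so some dual leaf $P$ has codimension two; being horizontal it is a section mapped isometrically onto $M$ (this corresponds to your pole sections $\sigma_\pm$). The vanishing of $A$ is then obtained by importing the rigidity of codimension-two souls: the exponential image of a small normal disc bundle of $P$ carries a connection metric with totally geodesic fibers, and the argument of Theorem 1.5 of \cite{Wa0} (together with \cite{BG}, section 6) shows it splits locally isometrically over $P$, so $A=0$ near $P$; an open–closed argument propagates this to all of $N$, and \cite{Wa}, \cite{GP} give the global metric splitting. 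That global soul-type input is precisely what is missing from your sketch; if you want to keep your holonomy framework, the $SO(2)$ case should be closed by observing that the pole sections are codimension-two dual leaves and then invoking this rigidity, rather than by trying to extract $dB\equiv 0$ from the O'Neill inequalities.
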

\begin{proof} 
If there is more than one leaf in the dual foliation, then these leaves must have codimension one or two in $N$ because of Theorem 3 in \cite{Wil}. Notice that not every leaf has codimension one, since the intersection of the leaves with a fiber of $\pi$ would result in a line field on $S^2$. Thus, there must be some dual leaf of codimension 2; pick one such, and call it  $P$; since $P$ is horizontal, the restriction of $\pi$ to $P$ is a covering of $M$, and $\pi$ maps $P$  isometrically onto the base $M$. 

The inverse of the restriction  $\pi_{|P}$  of  $\pi$ to $P$ yields a section $s:M\to N$ of the type studied in \cite{BG}, section 6.  Consider the normal bundle $\nu(P)$ of $P$ in $N$, and let $\epsilon>0$ be small enough so that the restriction
\[\exp:\nu^\epsilon(P)=\{u\in\nu(P)\mid |u|<\epsilon\} \longrightarrow N
\]
of the exponential map is a diffeomorphism onto its image. Endow $\nu^\epsilon(P)$ with the connection metric for which $\exp$ becomes an isometry. Then the bundle projection $\nu^\epsilon(P)\to P$ is a Riemannian submersion with totally geodesic fibers. Since the bundle is trivial, the arguments used in the proof of Theorem 1.5 in \cite{Wa0} carry over verbatim to imply that $\nu^\epsilon(P)$ splits locally isometrically over $P$.
 In particular, the horizontal distribution of $\pi$ is integrable in an open neighborhood of $P$. By the definition of  dual foliation, each leaf that intersects such a neighborhood has codimension 2 and is entirely contained in it. Thus, the set of points where the O'Neill tensor is zero is both open and closed in $N$,  and $A$ vanishes everywhere. By \cite{Wa} (see also \cite{GP}), $N$ splits metrically as a product. 
\end{proof}

\section{Twisting in principal torus bundles}\label{torussection}

Many of the examples of Riemannian submersions appear as quotient maps of free isometric actions. In this section, we investigate twisting for torus actions.

\begin{thm} \label{torus}
Consider a free isometric $T^k$ action on  a Riemannian manifold $M$  with nonnegative sectional curvature. If $M$ is simply connected, then the quotient submersion $\pi:M\to M/T^k$ is twisted.
\end{thm}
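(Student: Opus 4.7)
My plan is to argue by contradiction using the long exact homotopy sequence of the fibration $T^k \to M \to B := M/T^k$ together with Theorem~\ref{boundary}. Since $\pi_2(T^k)=0$ and, by hypothesis, $\pi_1(M)=0$, the sequence yields that the boundary operator
\begin{equation*}
\partial\colon \pi_2(B)\longrightarrow \pi_1(T^k)\cong\Z^k
\end{equation*}
is surjective. Assuming $\pi$ is not twisted, I will show that $\operatorname{Im}(\partial)$ is contained in a proper subgroup of $\Z^k$, producing the desired contradiction.

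The key additional structure is that, since the $T^k$-action is by isometries and its orbits are the fibers of $\pi$, the horizontal distribution is automatically $T^k$-invariant; consequently $T^k$ permutes the dual leaves. Fix $m\in F=\pi^{-1}(b)$, let $L$ denote the dual leaf through $m$, and set
\begin{equation*}
H := \{\,g\in T^k : g\cdot L = L\,\} = \{\,g\in T^k : g\cdot m \in L\,\},
\end{equation*}
which is a closed subgroup of $T^k$; freeness of the action then identifies $L\cap F$ with $H\cdot m\cong H$. If $H$ were equal to $T^k$, then for any $n\in M$ one could horizontally lift a base path from $\pi(m)$ to $\pi(n)$ starting at $m$, reaching an endpoint of the form $g_0\cdot n\in L$; since $g_0\in H$, the equivariance of the dual foliation would give that the dual leaf through $n$ equals $g_0^{-1}\cdot L=L$, forcing $L=M$ contrary to our assumption. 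Hence $H\subsetneq T^k$, and its identity component $H_0$ is a subtorus of some dimension $j<k$.

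To finish, I invoke Theorem~\ref{boundary} with $q=2$: every class in $\operatorname{Im}(\partial)$ is represented by the map $f\colon S^1\to F$ constructed in the proof of that theorem by horizontally lifting meridian loops through $e_1$ starting at $m$, so that $f(S^1)\subset L\cap F\cong H$. Consequently $\operatorname{Im}(\partial)$ is contained in the image of the inclusion homomorphism $\pi_1(H)\hookrightarrow \pi_1(T^k)\cong\Z^k$, which equals $\pi_1(H_0)\cong\Z^j$, a sublattice of rank $j<k$ and thus a proper subgroup of $\Z^k$, giving the contradiction. The main point requiring care is verifying that $L\cap F$ is genuinely an orbit of a closed Lie subgroup; this rests on closedness of the dual leaf $L$ in $M$, which in the compact setting is guaranteed by Wilking's theorem that the dual foliation is a singular metric foliation with closed leaves.
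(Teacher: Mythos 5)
Your route is genuinely different from the paper's (which factors $\pi$ into a chain of circle quotients, shows a non-twisted circle quotient forces a vanishing $A$-tensor and hence an isometric splitting off an $S^1$-factor, and lifts that splitting up the tower to contradict $\pi_1(M)=0$), and most of your skeleton is fine: surjectivity of $\partial\colon\pi_2(B)\to\pi_1(T^k)\cong\Z^k$ from $\pi_1(M)=0$, the equivariance $L\cap F=H\cdot m$, the implication $H=T^k\Rightarrow L=M$, and the use of the refined conclusion of the proof of Theorem \ref{boundary} (the representative of $\partial[a]$ has image in $L\cap F$). The genuine gap is the step where you need $H=\{g\in T^k:\, g\cdot m\in L\}$ to be a \emph{closed} subgroup, so that $H_0$ is a subtorus and the image of $\pi_1(H)\to\pi_1(T^k)$ is a sublattice of rank $j<k$. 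Your justification---that Wilking's theorem gives closed dual leaves when the total space is compact---fails on two counts. First, the theorem at hand does not assume $M$ compact. Second, and more seriously, what Theorem 3 of \cite{Wil} yields is intrinsic \emph{completeness} of the dual leaves (so that the dual foliation is a singular Riemannian foliation), not closedness: dual leaves of such foliations can be dense. For instance, take the flat torus $T^2=\R^2/\Lambda$ with $\Lambda$ generated by $(1,0)$ and $(\alpha,1)$, $\alpha$ irrational, and project onto the second coordinate: this is a principal circle bundle over $S^1$ with isometric action and compact, nonnegatively curved total space, yet the dual leaves are the lines in the direction $(0,1)$, which are dense and not closed. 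This example is of course not simply connected, but that is exactly the point: closedness of $L$ (equivalently of $H$) is not a general fact one can quote; in your scheme it has to be extracted from the hypothesis $\pi_1(M)=0$, and your argument does not do that.

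Concretely, if $H$ were a proper but dense subgroup, your contradiction evaporates: there is no subtorus $H_0$ of dimension $j<k$, $L\cap F$ is dense in $F$, and the mere fact that the loops representing $\partial[a]$ lie in $H\cdot m$ no longer confines their classes to a proper sublattice of $\Z^k$. Ruling this out requires an additional idea---for example, arguing that a loop contained in a leaf of the (singular) dual foliation is continuous for the leaf topology and analyzing $L\cap F$ inside $L$, or else following the paper and converting ``more than one dual leaf'' into a metric splitting that contradicts simple connectedness. As written, the proof is incomplete at precisely this point.
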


\begin{proof} 

Consider the sequence of homogeneous metric fibrations
\[M\overset{p_1}{\to}M_1\overset{p_2}{\to}\dots M_{k-1}\overset{p_k}{\to}M_k=M/T^k
\]
each of which has a circle as fiber. It is enough to show that each $p_i:M_{i-1}\to M_i$ has only one dual leaf. So, assume to the contrary that some $p_i$ has more than one leaf. Since dual leaves are isometric via the $S^1$ action, they form a regular codimension 1 metric foliation; thus, the $A$-tensor of this foliation vanishes, and since the ambient space has nonnegative curvature, it splits locally isometrically as a product with one factor tangent to the $S^1$ action. Since $S^1$ is closed, this splitting is global; i.e., $M_{i-1} = N\times S^1$ isometrically. We will get a contradiction once we show that this splitting lifts to a splitting of $M$ via $p_1$,..., $p_{i-1}$. We may inductively assume that $i= 1$; i.e., we have a homogeneous submersion $\pi:S^1\to M\to N\times S^1$ such that if $T$ is the vertical Killing field on $M$ generating the fibers of $\pi$, and $\bar X$ is the Killing field on the base tangent to the $S^1$ factor, then the basic lift $X$ of $\bar X$ is a Killing field on $M$ that commutes with $T$ (since the $S^1$ action on $N\times S^1$ is induced by an isometric torus action on $M$). The claim will follow once it is  shown that $X$ is a parallel vector field on $M$. 
Now if $Y$ is basic, then $(\nabla_YX)^h=0$ since the projected fields on the base have that property, and $(\nabla_YX)^v=A_YX=0$ since the projected fields span a plane of zero curvature. It remains to prove that $\nabla_TX=0$. But $(\nabla_TX)^h=0$ because
\[\langle \nabla_TX, Y\rangle=-\langle T, A_XY\rangle=0,
\]
and $(\nabla_TX)^v=0$ because $\langle\nabla_TX,T\rangle =0$ since $X$ is Killing (so that the operator $u\mapsto\nabla_u X$ is skew-adjoint).
\end{proof}

This has several simple applications: consider for example $M=SU(n)$, some (not necessarily maximal) torus $T^k$ contained in $ SU(n)$, and a Riemannian metric on $SU(n)$ invariant for $T^k$. Theorem \ref{torus} then implies that the quotient map $SU(n)\to SU(n)/T^k$  is twisted.

\section{Twisting due to the O'Neill tensor}
The O'Neill $A$ tensor of a Riemannian submersion measures the extent to which the horizontal distribution, as a subbundle of $TM$, fails to be integrable. It is often responsible for the twisting of the submersion, as illustrated in the next lemma. We first recall some terminology:

A Riemannian submersion $\pi:M\to B$ induces an orthogonal splitting $TM=\calH\oplus\calV$ of the tangent bundle of $M$, with $\calH$ denoting the horizontal, and $\calV=\ker\pi_*$ the vertical, distribution. There is a corresponding decomposition
\[z=z^\ho+z^\ve\in\calH\oplus\calV,\qquad z\in TM
\]
at the vector level. There are two tensor fields that measure the complexity of $\pi$. One
 is the $A$ tensor of O'Neill,  given at each point $p\in M$ by a map
$$
A:\calH_p\times \calH_p \to \calV_p, \qquad A_xy=[X,Y]^\ve(p),
$$
where  $X$, $Y$ are horizontal extensions of $x$,  $y$ to a neighbourhood of $p$. The antisymmetry of $A$, $A_xy=-A_yx$, means that we can consider $A$ as a linear map
$$
\tildeA_p:\Lambda^2(\calH_p)\to \calV_p, \qquad \tildeA_p(\sum_i x_i\w y_i)=\sum_i A_{x_i}y_i.
$$
The other is the second fundamental tensor of the fibers, given at $p\in M$ by
\[S:\calH_p\times\calV_p\to\calV_p, \qquad S_xu=-\nabla_U^\ve X,
\]
with $X$ as above, and $U$ a vertical field extending $u$ in a neighborhood of $p$.

\begin{lem}\label{Atensor}
Let $\pi:M\to B$ be a Riemannian submersion with nonnegative curvature;
if for some point $p\in M$, $\tildeA_p$ is onto, then the dual leaf through $p$ is open.
\end{lem}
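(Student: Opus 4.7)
The plan is to reduce the statement to a one-line linear-algebra observation by invoking the part of Wilking's theory of dual foliations that already encodes the nonnegative curvature hypothesis. All the geometric content is concentrated in a result from \cite{Wil}; once it is cited, the rest is formal.

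The first step is to recall from \cite{Wil} that under $\sec\geq 0$ the dual leaf $L^\#$ through $p$ carries the structure of an immersed (possibly singular) submanifold of $M$ whose tangent space at $p$ contains the whole horizontal distribution, so that $\calH_p\subseteq T_pL^\#$. This inclusion does not use the curvature assumption in an essential way; it is just the statement that horizontal curves emanating from $p$ stay in $L^\#$ by definition.

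The second and central step is to import the key consequence of nonnegative curvature that Wilking establishes: for every pair $x,y\in\calH_p$ the vertical vector $A_xy$ is tangent to the dual leaf at $p$. In the notation of the lemma this means $\mathrm{Im}\,\tildeA_p\subseteq T_pL^\#\cap\calV_p$. This is the one place where $\sec\geq 0$ is genuinely needed, and it is exactly the content of the non-trivial part of \cite{Wil}.

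With these two inclusions at hand, the hypothesis that $\tildeA_p$ is onto $\calV_p$ immediately gives
\[
T_pL^\#\;\supseteq\;\calH_p+\mathrm{Im}\,\tildeA_p\;=\;\calH_p\oplus\calV_p\;=\;T_pM.
\]
Since $L^\#$ is an immersed submanifold of $M$ whose tangent space at $p$ is all of $T_pM$, it must contain an open neighborhood of $p$; in other words, the dual leaf through $p$ is open. The main (indeed only) obstacle is to quote Wilking's result in precisely the form $A_xy\in T_pL^\#$; once that is granted, the argument above finishes the proof.
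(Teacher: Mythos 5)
Your argument is correct, but it is not the route the paper takes, and your description of where the hypotheses enter needs adjusting. The paper argues by contradiction using the machinery it sets up around the lemma: if the dual leaf $\calF_p$ were lower dimensional, a nonzero normal vector at $p$ is necessarily vertical, and by Proposition 6.1 of \cite{Wil} its parallel transport along any horizontal geodesic $\g$ is a Jacobi field; being a holonomy Jacobi field it satisfies $J'=-A^*_{\g'}J+S_{\g'}J$, and parallelism forces $A^*_{\g'(0)}J(0)=0$ for every horizontal direction, i.e.\ $J(0)\perp\mathrm{Im}\,\tildeA_p=\calV_p$, a contradiction. Your proof instead goes through tangency of the leaf: $\calH_p\subseteq T_p\calF_p$ and $\mathrm{Im}\,\tildeA_p\subseteq T_p\calF_p$, hence $T_p\calF_p=T_pM$ and the leaf is open. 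That works, and it is arguably more elementary and more general. However, your central claim --- that $A_xy\in T_p\calF_p$ is ``exactly the content of the non-trivial part of \cite{Wil}'' and ``the one place where $\sec\ge 0$ is genuinely needed'' --- is a misattribution. That tangency statement requires no curvature hypothesis at all: once the dual leaf is known to be a smooth immersed submanifold (which Wilking obtains from Sussmann's orbit theorem, again with no curvature assumption), horizontal vector fields are tangent to the leaf, hence so is their bracket, whose vertical part at $p$ is exactly $A_xy$ in the paper's convention; equivalently, it is part of the orbit theorem's description of the tangent space of an orbit. So your proof in fact establishes the lemma with no curvature assumption, whereas the paper's proof uses $\sec\ge 0$ essentially, through Wilking's Proposition 6.1 and the holonomy Jacobi field equation. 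The trade-off: your route buys generality and brevity at the cost of invoking the orbit theorem's tangent-space statement (which you should cite as such, not as a curvature result), while the paper's route stays inside the Jacobi-field formalism it uses elsewhere and makes the role of nonnegative curvature explicit.
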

\begin{proof}
Suppose that the leaf $\calF_p$ through $p$ has  dimension less than that of $M$. By Proposition 6.1 in \cite{Wil}, the parallel transport of each normal direction to $\calF_p$ along a horizontal geodesic of $\pi$ generates a Jacobi field $J$. It is immediate to see that $J$ is in fact a holonomy Jacobi field for $\pi$ and hence  satisfies the equation
$$
J'(t)= -A^*_{\g'(t)}J + S_{\g'}J,
$$   
where  $\g$  denotes an arbitrary horizontal geodesic with $\g(0)=p$, and $A^*$ is the adjoint of the O'Neill tensor (see \cite{GrW}). Thus, at $t=0$, $A^*_{\g'(0)}J(0)=0$, and therefore $J(0)$ is orthogonal to the image of $\tildeA_p$, which is impossible. 
\end{proof}

The similarity of this result with Theorem C in \cite{GW1}, where a similar theorem was proved for the Sharafutdinov submersion, is somewhat intriguing. 

\begin{cor}\label{principal}
Let $M$ be a principal bundle over $B$ with an invariant metric of nonnegative curvature;
if for some point $p\in M$, $\tildeA_p$ is onto, then the submersion is twisted.
\end{cor}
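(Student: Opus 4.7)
The plan is to derive this corollary by showing that the dual leaf $\calF_p$ through $p$ is both open and closed in the connected manifold $M$. By Lemma \ref{Atensor}, the hypothesis that $\tildeA_p$ is onto already yields openness, so all the work is in establishing the closedness.

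For closedness I exploit the principal $G$-action. A $G$-invariant metric makes the action isometric, and since $G$ preserves the fibers of $\pi$ it also preserves the vertical distribution $\calV$, and hence its orthogonal complement $\calH$. Thus $G$ sends horizontal curves to horizontal curves and permutes dual leaves: $g \cdot \calF_q = \calF_{g \cdot q}$ for every $g \in G$ and $q \in M$. Consequently every leaf of the form $\calF_{g \cdot p}$ is open, being a $G$-translate of the open set $\calF_p$, and two such translates are either equal or disjoint.

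Horizontal lifts from $p$ of piecewise smooth curves in the connected base $B$ give $\pi(\calF_p) = B$, so the $G$-saturation $G \cdot \calF_p$ equals all of $M$. This yields a decomposition $M = \bigsqcup_{gH \in G/H} g \cdot \calF_p$, where $H = \{g \in G : g \cdot p \in \calF_p\}$ is the (open) stabilizer subgroup of the leaf $\calF_p$. Each piece in this decomposition is an open dual leaf, so the complement $M \setminus \calF_p$ is open; therefore $\calF_p$ is clopen in the connected manifold $M$ and equals $M$, which is exactly the twisting of $\pi$. The key structural point is the $G$-equivariance of the dual foliation, which follows immediately from the isometric action preserving the horizontal distribution, so no real obstacle arises; the one routine check is global existence of horizontal lifts, which is the usual completeness assumption implicit in Wilking's dual foliation setup.
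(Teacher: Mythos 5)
Your proof is correct and takes essentially the same route as the paper: Lemma \ref{Atensor} gives openness of the dual leaf through $p$, and the isometric principal action (transitive on the fibers, each dual leaf projecting onto $B$) permutes the dual leaves transitively, so openness propagates to every leaf and connectedness of $M$ forces a single leaf. The paper phrases this as all leaves having the same dimension and deriving a contradiction with openness, whereas you run the equivalent clopen argument, merely spelling out the transitivity on leaves that the paper leaves implicit.
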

\begin{proof}
Observe that  the isometric action preserves the dual leaves, and hence they will be all of the same dimension; 
if $\pi:M\to B$ were not twisted, then the leaf $\calF$ through $p$ would have  dimension less than that of $M$, and could not be open.
\end{proof}

\begin{cor}
A principal circle bundle $M$ with nonnegative curvature  is either twisted or else it splits locally as the projection of a metric product onto one of the factors.
\end{cor}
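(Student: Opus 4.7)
The plan is to combine Corollary \ref{principal} with the local splitting result already invoked earlier in the paper. First I would assume that $\pi:M \to B$ is \emph{not} twisted. By (the contrapositive of) Corollary \ref{principal}, the map $\tildeA_p:\Lambda^2(\calH_p)\to \calV_p$ must fail to be surjective at every point $p\in M$. The crucial simplification in the circle-bundle case is that $\calV_p$ is one-dimensional, so failure of surjectivity is equivalent to $\tildeA_p=0$. Consequently the O'Neill tensor $A$ vanishes identically on $M$.

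With $A\equiv 0$, the horizontal distribution $\calH$ becomes integrable, and its leaves are local sections of the bundle. At this point I would appeal to the same splitting ingredient used at the end of the product theorem in Section 3, namely the result of \cite{Wa} (see also \cite{GP}): a Riemannian submersion with vanishing O'Neill tensor from a nonnegatively curved total space splits locally as a metric product with one factor tangent to the fibers. Applied here, this says precisely that every point of $M$ has a neighborhood isometric to a product $U\times V$ in which the $S^1$-orbits correspond to one factor and $\pi$ is realized as the projection onto the other. That is the desired conclusion.

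The main point to be checked carefully is that the cited local splitting really applies under the weaker hypotheses of this corollary (we assume neither compactness nor simple connectedness of $M$, and we do not start with totally geodesic fibers). Here, however, the $S^1$-invariance of the metric is precisely what rigidifies the picture: since the $S^1$-action permutes the horizontal leaves isometrically, and the bracket of basic vector fields with the fundamental Killing field vanishes, the vanishing of $A$ already forces the horizontal leaves to be pairwise isometric via the vertical flow. Combined with the nonnegative curvature assumption this yields the local metric product structure as in \cite{Wa}, completing the argument.
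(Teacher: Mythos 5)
Your proposal is correct and follows essentially the same route as the paper: not twisted forces $\tildeA_p$ to be non-surjective everywhere (via Lemma \ref{Atensor}/Corollary \ref{principal}), which for a one-dimensional vertical space means $A\equiv 0$, and the local splitting then comes from \cite{Wa}. The paper's own proof is exactly this, stated in one line, so your extra discussion of the hypotheses of \cite{Wa} is not needed beyond what the cited result already provides.
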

\begin{proof}
This is immediate from Lemma \ref{Atensor}, for if the submersion is not twisted, then the $A$ tensor must vanish everywhere, and the splitting  follows from results in \cite{Wa}. 
\end{proof}

It has been pointed out to the authors that a result similar to corollary \ref{principal} has been obtained in \cite{Shi}, although by different methods.

%
%


\begin{thebibliography}{10}

\bibitem{BG} Berestovskii, V. N., Guijarro, Luis, \emph{A metric characterization of Riemannian submersions}, Ann. Global Anal. Geom. 18 (2000), no. 6, 577--588.

\bibitem{BT} Bott, R. and Tu, L., \emph{Differential forms in algebraic topology}, Graduate Texts in Mathemtics, 82 (1982). 

\bibitem{Bre} Bredon, G.E., \emph{Topology and Geometry}, Springer Verlag (1993).

\bibitem{GrW} Gromoll, D., Walschap, G., \emph{Metric Foliations and Curvature}, Progr. in Math., 268, Birkhauser Verlag (2009).

\bibitem{GP} Guijarro, Luis; Petersen, Peter \emph{Rigidity in non-negative curvature}, Ann. Sci. École Norm. Sup. (4) 30 (1997), no. 5, 595--603.

\bibitem{GSW} Guijarro, L., Schick, T. and Walschap, G., \emph{Bundles with spherical Euler class}, Pacific Journal of Mathematics 207 (2002), no. 2, pp. 377--391.

\bibitem{GW} Guijarro, Luis,  Walschap, Gerard, \emph{The metric projection onto the soul}, Trans. Amer. Math. Soc. 352 (2000), no. 1, 55--69. 

\bibitem{GW1} Guijarro, Luis,  Walschap, Gerard, \emph{Twisting and nonnegative curvature metrics on vector bundles over the round sphere}, Journal of Differential Geometry 52 (2000), pp.189-202

\bibitem{GW2} Guijarro, Luis,  Walschap, Gerard, \emph{Transitive holonomy groups and rigidity in nonnegative curvature}, Mathematische Zeitschrift 237 (2001), pp. 251-257.


\bibitem{Her} Hermann, R., \emph{A sufficient condition that a mapping of Riemannian manifolds be a fiber bundle}, Proc. of the AMS, 11 (1960) 236--242.

\bibitem{KM} Kerr, M.,  Tapp, K., \emph{A note on quasi-positive curvature conditions}, preprint, 
\texttt{http://arxiv.org/abs/1211.3897}

\bibitem{MC} McCleary, J., \emph{A User's Guide to Spectral Sequences}, Cambridge Studies in Advanced Mathematics (2000).

\bibitem{Shi} Shi, Y., \emph{The dual foliation of some singular Riemannian foliations}, preprint.

\bibitem{St} Steenrod, Norman, \emph{The topology of fiber bundles}, Princeton University Press, (1951).

\bibitem{Wa0} Walschap, Gerard, \emph{Nonnegatively curved manifolds with souls of codimension 2}, J. Diff. Geometry \textbf{27} (1998) 525--537.

\bibitem{Wa} Walschap, Gerard, \emph{Metric foliations and curvature}, J. Geom. Anal. 2 (1992), no. 4, 373--381.

\bibitem{Wil} Wilking, Burkhard; \emph{A duality theorem for Riemannian foliations in nonnegative sectional curvature}, Geom. Funct. Anal. 17 (2007), no. 4, 1297--1320.
\end{thebibliography}
\end{document}